\newtheorem{theorem}{Theorem}[section]
\newtheorem{lemma}[theorem]{Lemma}
\newtheorem{corollary}[theorem]{Corollary}
\newtheorem{proposition}[theorem]{Proposition}
\newtheorem{notation}[theorem]{Notation}
\theoremstyle{definition}
\newtheorem{remark}[theorem]{Remark}
\def\G{{\mathbb G}}
\def\P{{\mathbb P}}
\def\Z{{\mathbb Z}}
\def\cA{{\mathcal A}}
\def\cB{{\mathcal B}}
\def\cC{{\mathcal C}}
\def\cD{{\mathcal D}}
\def\cE{{\mathcal E}}
\def\cF{{\mathcal F}}
\def\cG{{\mathcal{G}}}
\def\cH{{\mathcal{H}}}
\def\cK{{\mathcal K}}
\def\cL{{\mathcal L}}
\def\cM{{\mathcal M}}
\def\cN{{\mathcal{N}}}
\def\cO{{\mathcal{O}}}
\def\cQ{{\mathcal{Q}}}
\def\cU{{\mathcal U}}
\def\cV{{\mathcal V}}
\def\cW{{\mathcal W}}
\def\cZ{{\mathcal Z}}
\def\G{{\mathbb{G}}}
\def\DA{{\rm A}}
\def\DC{{\rm C}}
\def\DF{{\rm F}}
\def\GL{\operatorname{GL}}
\def\PGL{\operatorname{PGL}}
\def\SP{\operatorname{Sp}}
\def\HU{{\rm H}\cU}
\def\HM{{\rm H}\cM}
\def\HG{{\rm H}\cG}
\def\lra{\longrightarrow}
\def\ra{\rightarrow}
\def\lra{\longrightarrow}
\def\rat{\dashrightarrow}
\def\operatorname#1{\mathop{\rm #1}\nolimits}
\def\Aut{\operatorname{Aut}}
\def\Chow{\operatorname{Chow}}
\def\Pic{\operatorname{Pic}}
\def\deg{\operatorname{deg}}
\def\det{\operatorname{det}}
\def\loc{\operatorname{Locus}}
\def\rat{\operatorname{RatCurves^n}}
\def\NE{{\operatorname{NE}}}
\newcommand{\ol}[1]{\overline{#1}}
\newcommand{\pb}{\ar@{}[dr]|(.25){\text{\pigpenfont J}}}
\newcommand*\wthelper[2]{%
        \hbox{\dimen@\accentfontxheight#1%
                \accentfontxheight#11.15\dimen@
                $\m@th#1\widetilde{#2}$%
                \accentfontxheight#1\dimen@
        }%
}
\newcommand*\accentfontxheight[1]{%
        \fontdimen5\ifx#1\displaystyle
                \textfont
        \else\ifx#1\textstyle
                \textfont
        \else\ifx#1\scriptstyle
                \scriptfont
        \else
                \scriptscriptfont
        \fi\fi\fi3
}
\newcommand{\shse}[3]{0 ~\ra ~#1~ \lra ~#2~ \lra ~#3~ \ra~ 0}
\begin{document}

\title[A Characterization of Symplectic Grassmannians]{A Characterization of Symplectic Grassmannians}

\author[Occhetta]{Gianluca Occhetta}
\address{Dipartimento di Matematica, Universit\`a di Trento, via
Sommarive 14 I-38123 Povo di Trento (TN), Italy} 
\email{gianluca.occhetta@unitn.it}
\thanks{First author supported by PRIN project ``Geometria delle variet\`a algebriche'' Grant number 2010S47ARA\_010. First and second author supported by the Department of Mathematics of the University of Trento.}
\author[Sol\'a Conde]{Luis E. Sol\'a Conde}
\address{Dipartimento di Matematica, Universit\`a di Trento, via
Sommarive 14 I-38123 Povo di Trento (TN), Italy}
\email{lesolac@gmail.com}
\author[Watanabe]{Kiwamu Watanabe}
\address{Course of Mathematics, Programs in Mathematics, Electronics and Informatics,
Graduate School of Science and Engineering, Saitama University.
Shimo-Okubo 255, Sakura-ku Saitama-shi, 338-8570 Japan}
\email{kwatanab@rimath.saitama-u.ac.jp}
\thanks{The third author was partially supported by JSPS KAKENHI Grant Number 26800002.}
\begin{abstract}
We provide a characterization of Symplectic Grassmannians in terms of their Varieties of Minimal Rational Tangents.
\end{abstract}

\subjclass[2010]{Primary 14J45; Secondary 14E30, 14M15, 14M17}
\maketitle

\section{Introduction}\label{sec:intro}

Families of rational curves have been shown to be a powerful tool in birational geometry (see \cite{De,kollar}). Remarkably, the local analysis of these families via certain differential geometric techniques has been very useful in order to recover global properties of Fano manifolds of Picard number one (see \cite{Hw,Hw15}, and the references therein). In fact, many geometric features of these varieties can be read from the projective geometry of the set of tangent directions of a family of minimal rational curves at the general point of the variety --the so-called variety of minimal rational tangents, or VMRT, for short.

An archetypal result of this type, due to Mok and to Hong-Hwang \cite{Mk3,HH}, allows us to recognize a homogeneous Fano manifold $X$ of Picard number one by looking at its VMRT at a general point $x$, and at its embedding into $\P(\Omega_{X,x})$. In a nutshell, their strategy consists of constructing a biholomorphism between analytic open subsets of  $X$ and the model manifold preserving the 
VMRT structure, and then using the so-called Cartan-Fubini extension principle to extend it along the rational curves of the family. This characterization works 
for all rational homogeneous manifolds of Picard number one whenever the VMRT is rational homogeneous, which is always the case except for the short root cases; namely for symplectic Grassmannians, and for two varieties of type $\DF_4$. In the former cases, quoting Mok (\cite{Mk4}): ``{\it An
important feature for the primary examples of symplectic Grassmannians} [\dots\hspace{-0.06cm}] {\it is the
existence of local differential-geometric invariants which cannot possibly be captured by
the VMRT at a general point.}''

In this paper we show that, if we impose that the VMRT is the expected one at every point of the variety, we may still characterize symplectic Grassmannians (see Theorem \ref{thm:main} for the precise statement). In fact, we may use a family of minimal rational curves to construct a smooth projective variety $\ol{X}$, dominating $X$, supporting as many independent $\P^1$-bundle structures as its Picard number. Then we may use \cite{OSWW} to claim that $\ol{X}$ is a flag manifold (of type $\DC$), and $X$ is, subsequently, a symplectic Grassmannian. The same argument had been  successfully used in \cite{OSW} 
to characterize rational homogeneous manifolds of Picard number one corresponding to long roots. In that case the existence of a family of rational curves with smooth evaluation and the appropriate fibers allowed to complete the proof, without making any assumption on its image into $\P(\Omega_X)$. In the case of symplectic Grassmannians, the existence of an adequate embedding of the family into $\P(\Omega_X)$ seems to be crucial in order to carry out the proof.
We believe that a similar technique may lead also to the characterization of the last two remaining short root cases.

The structure of the paper is the following: Section \ref{sec:prelim} contains some preliminary material regarding rational curves, symplectic Grassmannians and flag bundles. In Section \ref{sec:VMRTC} we show how to reconstruct, upon the VMRT,  certain  bundles over the manifold $X$ (corresponding, a posteriori, to universal bundles on the symplectic Grassmannian), and study some of their properties. They can be used (see Section \ref{sec:isotrop}) to prove that a certain subvariety $\HU$ of the family of rational curves is indeed a subfamily. Section \ref{sec:reduc} contains the last technical ingredient of the proof: the existence of a nondegenerate skew-symmetric form on one of those bundles. This will allow us in Section \ref{sec:flags} to construct a flag bundle over $\HU$ of the appropriate type, which we prove to be a flag manifold.

\section{Preliminaries}\label{sec:prelim}

\subsection{Families of rational curves}

For the readers' convenience, we will introduce here some notation that we will use along the paper, regarding rational curves on algebraic varieties. We refer to \cite{kollar} for a complete account on this topic.
 
Given a smooth complex projective variety $Y$, a {\it family of rational curves on} $Y$ is, by definition, the normalization $\cN$ of an irreducible component of the scheme $\rat(Y)$. Each of these families comes equipped with a smooth $\P^1$-fibration $p:\cV\to \cN$ and an evaluation morphism $q:\cV\to Y$. 
The family $\cN$ is called {\it dominating} if $q$ is dominant, and {\it unsplit} if $\cN$ is proper. Given a point $y\in Y$, we denote by $\cN_y$ the normalization of $q^ {-1}(y)$, and by $\cV_y$ its fiber product with $\cV$ over $\cN$. For  a general point $y \in Y$, $\dim \cN_y =-K_Y \cdot \ell -2$, where $\ell$ is an element of the family. 
The composition $q^*\Omega_Y\to \Omega_{\cV}\to\Omega _{\cV/\cN}$ gives rise to a rational map $\tau:\cV\dashrightarrow \P(\Omega_Y)$, that we call the {\it tangent map}. A point $P\in\cV$ for which this map is not defined is called a {\it cusp} of the family.  

\begin{remark}\label{rem:O1} Let us denote by $Y^\circ\subset Y$ a nonempty subset such that the tangent map $\tau$ is defined at $\cV^\circ:=q^ {-1}(Y^\circ)$.
By definition, it satisfies
\begin{equation*}
\tau^{\ast}\cO_{\P(\Omega_{Y^\circ})}(1)=(\Omega_{\cV/\cN})_{|\cV^\circ}=\cO_{\cV^\circ}(K_{\cV/\cN}).
\end{equation*}
\end{remark}

We will be mostly interested in the case in which $\cN$ is a dominating, unsplit family such that $q:\cV\to Y$
is a smooth morphism, and we will simply say that the family is {\it beautiful}. Note that in this case the varieties $\cV$ and $\cN$ are smooth as well. On the other hand, the first two assumptions allow us to say, by \cite[Theorem~3.3]{Ke2}, that the set of cusps of $\cN$ does not dominate $Y$, and $\tau$ is defined over $\cN_y$, for $y\in Y$ general. We denote the restriction of $\tau$ to $\cN_y$ by $\tau_y$. Its image is usually called the {\it VMRT} of $\cN$ at $y$; it is known that, for a general $y$, $\tau_y$ is its normalization (see \cite[Theorem 1]{HM2}, \cite[Theorem 3.4]{Ke2}).

\begin{remark} Beautiful families arise naturally, for instance, in the context of Campana-Peternell conjecture:
any family of rational curves of minimal degree with respect to a fixed ample line bundle on a Fano manifold  with nef tangent bundle is a beautiful family (see for instance \cite[Proposition 2.10]{MOSWW}).
\end{remark}

\begin{notation}
Along the paper we will often consider vector bundles on the projective line $\P^1$. For simplicity, we will denote by $E(a_1^{k_1},\dots,a_r^{k_r})$ the vector bundle $\bigoplus_{j=1}^r\cO(a_j)^{\oplus k_j}$ on $\P^1$.
\end{notation}

\begin{remark}\label{rem:beautifulfree}
Given a curve $C=p^{-1}(c)$, $c\in\cN$, of a beautiful family $\cN$ of rational curves 
in $Y$, we have a surjective map $(T_{\cV})_{|C}\to (q^*T_{Y})_{|C}$. Since $(T_{\cV})_{|C}\cong E(2,0^{\dim(\cN)})$, it follows that $(q^*T_{Y})_{|C}$ is nef, that is, $q_{|C}:C\to Y$ is {\it free} (cf. \cite[Definition~II.3.1]{kollar}). In particular, the natural map from $\cN$ to $\Chow(Y)$ is injective, that is two different elements of $\cN$ cannot correspond to the same curve in $Y$ (see, for instance, \cite[Proof of Lemma~5.2]{KK}).
\end{remark}

\subsection{Statement of the Main Theorem}\label{ssec:statement}
 
The goal of this paper is to characterize rational homogeneous manifolds of the form $\SP(2n)/P_r$, where $P_r$ is a maximal parabolic subgroup, determined by a node $r\in\{2,\dots,n-1\}$ in the Dynkin diagram of $\SP(2n)$ --where the nodes of the diagram are numbered as in (\ref{eq:dynkins}). A variety of this kind is classically interpreted as the {\it Symplectic Grassmannian} parametrizing the $(r-1)$-dimensional linear subspaces in $\P^{2n-1}$ that are isotropic with respect to a certain nondegenerate skew-symmetric form. 
\begin{equation}\label{eq:dynkins}
\begin{array}{c}\vspace{-0.1cm}
\ifx\du\undefined
  \newlength{\du}
\fi
\setlength{\du}{3.3\unitlength}
\begin{tikzpicture}
\pgftransformxscale{1.000000}
\pgftransformyscale{1.000000}

\definecolor{dialinecolor}{rgb}{0.000000, 0.000000, 0.000000} 
\pgfsetstrokecolor{dialinecolor}
\definecolor{dialinecolor}{rgb}{0.000000, 0.000000, 0.000000} 
\pgfsetfillcolor{dialinecolor}


\pgfsetlinewidth{0.300000\du}
\pgfsetdash{}{0pt}
\pgfsetdash{}{0pt}

\pgfpathellipse{\pgfpoint{-6\du}{0\du}}{\pgfpoint{1\du}{0\du}}{\pgfpoint{0\du}{1\du}}
\pgfusepath{stroke}
\node at (-6\du,0\du){};

\pgfpathellipse{\pgfpoint{4\du}{0\du}}{\pgfpoint{1\du}{0\du}}{\pgfpoint{0\du}{1\du}}
\pgfusepath{stroke}
\node at (4\du,0\du){};

\pgfpathellipse{\pgfpoint{14\du}{0\du}}{\pgfpoint{1\du}{0\du}}{\pgfpoint{0\du}{1\du}}
\pgfusepath{stroke}
\node at (14\du,0\du){};

\pgfpathellipse{\pgfpoint{24\du}{0\du}}{\pgfpoint{1\du}{0\du}}{\pgfpoint{0\du}{1\du}}
\pgfusepath{stroke}
\node at (24\du,0\du){};

\pgfpathellipse{\pgfpoint{34\du}{0\du}}{\pgfpoint{1\du}{0\du}}{\pgfpoint{0\du}{1\du}}
\pgfusepath{stroke}
\node at (34\du,0\du){};

\pgfpathellipse{\pgfpoint{44\du}{0\du}}{\pgfpoint{1\du}{0\du}}{\pgfpoint{0\du}{1\du}}
\pgfusepath{stroke}
\node at (44\du,0\du){};

\pgfsetlinewidth{0.300000\du}
\pgfsetdash{}{0pt}
\pgfsetdash{}{0pt}
\pgfsetbuttcap

{\draw (-5\du,0\du)--(3\du,0\du);}
{\draw (5\du,0\du)--(13\du,0\du);}
{\draw (25\du,0\du)--(33\du,0\du);}
{\draw (34.65\du,0.7\du)--(43.35\du,0.7\du);}
{\draw (34.65\du,-0.7\du)--(43.35\du,-0.7\du);}


{\pgfsetcornersarced{\pgfpoint{0.300000\du}{0.300000\du}}\definecolor{dialinecolor}{rgb}{0.000000, 0.000000, 0.000000}
\pgfsetstrokecolor{dialinecolor}
\draw (40.8\du,-1.2\du)--(37\du,0\du)--(40.8\du,1.2\du);}

\pgfsetlinewidth{0.400000\du}
\pgfsetdash{{1.000000\du}{1.000000\du}}{0\du}
\pgfsetdash{{1.000000\du}{1.00000\du}}{0\du}
\pgfsetbuttcap
{\draw (15.3\du,-1\du)--(23\du,-1\du);}

\node[anchor=west] at (48\du,0\du){${\rm C}_n$};

\node[anchor=south] at (-6\du,1.1\du){$\scriptstyle 1$};

\node[anchor=south] at (4\du,1.1\du){$\scriptstyle 2$};

\node[anchor=south] at (14\du,1.1\du){$\scriptstyle 3$};

\node[anchor=south] at (24\du,1.1\du){$\scriptstyle n-2$};

\node[anchor=south] at (34\du,1.1\du){$\scriptstyle n-1$};

\node[anchor=south] at (44\du,1.1\du){$\scriptstyle n$};

\end{tikzpicture} \vspace{0.1cm}\\
\end{array}
\end{equation}

The Pl\"ucker embedding of $\SP(2n)/P_r$ is covered by lines, and the family of these lines in $\SP(2n)/P_r$ is beautiful. Furthermore, its evaluation morphism is isotrivial, with fibers isomorphic to the smooth projective variety 
$$\cC:=\P\big(\cO_{\P^{r-1}}(2)\oplus \cO_{\P^{r-1}}(1)^{2n-2r}\big), \qquad \quad r\in\{2,\dots,n-1\},$$ 
 and its tangent map is an embedding (see \cite[1.4.7]{Hw}). The restriction of this embedding to any fiber is projectively equivalent to the embedding of $\cC$ into $$\P\big(H^0(\cC,\cO_{\cC}(1))\big)=\P\big(H^0(\P^{r-1}, \cO_{\P^{r-1}}(2)\oplus \cO_{\P^{r-1}}(1)^{2n-2r})\big).$$

Note that the homogeneous 
manifold $\SP(2n)/P_{r-1,r+1}$ (where $P_{r-1,r+1}$ denotes the parabolic subgroup determined by the nodes $r-1$, $r+1$) can be identified with a subfamily of the family of lines in $\SP(2n)/P_r$, which we call the family of {\it isotropic lines}.
Its evaluation morphism has fibers isomorphic to $${\rm H}\cC:=\P\big(\cO_{\P^{r-1}}(1)^{2n-2r}\big).$$

We can now give a precise statement of the main result in this paper.

\begin{theorem}\label{thm:main}
Let $X$ be a Fano manifold of Picard number one, endowed with a beautiful family of rational curves. Assume that its tangent map $\tau$ is a morphism, satisfying that $\tau_x$ is projectively equivalent to the embedding of $\cC$ into $\P\big(H^0(\cC,\cO_{\cC}(1))\big)$, for every $x\in X$. Then $X$ is isomorphic to a symplectic Grassmannian.
\end{theorem}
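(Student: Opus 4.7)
The plan is to follow the roadmap laid out in the introduction. The main idea is that the hypothesis on the VMRT at \emph{every} point of $X$, combined with the beautifulness of the family, is strong enough to globalize local VMRT structure and eventually recognize $X$ as a symplectic Grassmannian via the flag-manifold characterization of \cite{OSWW}. Throughout, let $p:\cV\to\cN$ and $q:\cV\to X$ denote the $\P^1$-bundle and evaluation of the family, and $\tau:\cV\to\P(\Omega_X)$ the tangent map.

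First I would globalize the fibration $\pi:C\to\P^{r-1}$ of the model VMRT. Since $\tau$ is a morphism whose restriction to each $\cV_x$ is projectively equivalent to the model embedding of $C$, the relative analogue of $\pi$ should descend to a projective bundle $\cU\to X$ with fibers $\P^{r-1}$. From this one extracts a pair of vector bundles on $X$, morally a subbundle $\cE\subset\cF$ of ranks $r$ and $2n-r$, playing the role of the universal flag on the symplectic Grassmannian. This is the content of Section~3. The key technical point is that the projective equivalence is assumed at every point of $X$, so the fiberwise fibrations can be glued.

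Next, using these bundles, I would identify the locus $\HU\subset\cN$ of rational curves whose tangent direction at the general point lies in the distinguished subvariety $HC\subset C$, and prove that $\HU$ is a subfamily, i.e.\ that the condition is stable under $p:\cV\to\cN$ (Section~4). I expect the proof to rest on a splitting-type analysis: the restriction of $\cE$ and $\cF$ to each curve of the family has a rigidly prescribed splitting forced by minimality/positivity, and these constraints show that a tangent direction belonging to $HC$ at one point must belong to $HC$ all along the curve. Remark~\ref{rem:beautifulfree} (freeness) and the explicit form of $HC$ inside $C$ make this tractable.

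The deepest step is the construction of a nondegenerate skew-symmetric form on the rank-$(2n-2r)$ quotient $\cF/\cE$ (Section~5). As Mok's quote in the introduction warns, the symplectic structure is a local differential invariant that is not detected by the VMRT at a single general point; this is exactly why the pointwise hypothesis on $\tau_x$ is indispensable. I would exploit the fact that the distinguished $\cO_{\P^{r-1}}(2)$ summand of the model $C$ is canonically a symmetric square, together with the global coincidence of VMRTs, to produce the form pointwise and then check that it is a morphism of bundles; this is the step I expect to be the main obstacle. Once the symplectic form is in hand, I would combine $\HU$ with the induced bundle data to construct (Section~6) an iterated $\P^1$-bundle tower $\ol{X}\to X$ carrying as many independent $\P^1$-bundle structures as its Picard number. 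Invoking \cite{OSWW} then identifies $\ol{X}$ with the complete flag manifold of type $\DC_n$, and the contraction $\ol{X}\to X$ realizes $X$ as $\SP(2n)/P_r$, completing the proof.
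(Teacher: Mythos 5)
Your roadmap coincides with the paper's (Sections 3--6 do exactly what you describe: construct the bundles $\cF_1\subset\cF_2$ with quotient $\cQ$ from the fiberwise VMRT via a Fischer--Grauert/cocycle argument, prove the exceptional locus $\HU=\P(\cF_1)\times_X\P(\cQ)$ is a subfamily, put a symplectic form on $\cQ$, and build a flag bundle to which the characterization of complete flags by independent $\P^1$-bundle structures applies). The steps corresponding to Sections 3, 4 and 6 are sketched in a way consistent with the actual arguments; for instance, the paper's proof that $\HU$ is a subfamily amounts to the intersection-number computation $\cO_{\cU}(\HU)\cdot\ell'=0$, which rests on positivity and splitting constraints of the kind you anticipate, such as $(\cF_1)_{|\ell}\cong E(-1,0^{r-1})$.

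The genuine gap is precisely where you predict the main obstacle: the symplectic form. Your proposed mechanism --- ``produce the form pointwise and then check that it is a morphism of bundles,'' starting from the $\cO_{\P^{r-1}}(2)$ summand being a symmetric square --- cannot work, and for the very reason quoted from Mok: the fiberwise VMRT $\cM_x$ (the blow-up of $\P(\cF_{2x})$ along $\P(\cQ_x)$) carries no canonical skew form on $\cQ_x$; its automorphism group is the full stabilizer $\Aut(\P(\cF_{2x}))_{\P(\cQ_x)}$, which preserves no such form, so there is nothing pointwise to glue. The paper's actual construction is global and differential-geometric: one first shows, using that the VMRT of the induced family $\cM_1$ on $\cU_1=\P(\cF_1)$ is a \emph{linear} $\P^{2n-2r}$ at every point, that $\cU_1$ carries a second smooth fibration $\rho:\cU_1\to\cW$ with fibers $\P^{2n-2r+1}$; the bundle $\cG$ with $\cU=\P(\cG)$ over $\cU_1$ is then identified with $\Omega_{\cU_1/\cW}$ up to twist, the subsheaf $\cD=q_1^*\cQ^\vee\otimes\cO_{\cU_1}(-1)\otimes\rho^*\cL\subset T_{\cU_1/\cW}$ restricts to a \emph{contact} distribution on each fiber of $\rho$, and the O'Neill tensor (Lie bracket) of $\cD$ supplies the nondegenerate skew-symmetric isomorphism $\cQ^\vee\to\cQ$. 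Without this, or an equivalent global input, the reduction of the structure group to $\SP(\cQ_x^\vee)$, and hence the construction of the $\DC_{n-r}$-flag bundle in the last step, does not get off the ground.
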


\subsection{Flag bundles}\label{ssec:flag}

Let $G$ be a semisimple group, $P\subset G$ a parabolic subgroup, and $\cD$ the Dynkin diagram of $G$.
A $G/P$-bundle over a smooth projective variety $Y$ is a smooth projective morphism $q:E\to Y$
such that all the fibers of $q$ over closed points are isomorphic to $G/P$. If $P=B$ is a Borel subgroup of $G$ we will also call $E$ a flag bundle of type $\cD$ or a $\cD$-bundle. We present here some general facts on these bundles, without proofs, and refer the reader to \cite{OSW} for details.

Assume $Y$ is simply connected (for instance, if it is rationally connected); whenever the identity component of the automorphism group of $G/P$ is a semisimple group with the same Lie algebra as $G$, a $G/P$-bundle $q:E\to Y$ determines a $\cD$-bundle $\ol{q}:\ol{E}\to Y$ dominating it. Furthermore, in this case, given a parabolic subgroup $P'\subset G$, containing $B$, there exist a $G/P'$-bundle $q':E'\to Y$, and a contraction $\ol{q}':\ol{E}\to E'$ such that $\ol{q}=q'\circ \ol{q}'$, whose fibers are isomorphic to $P'/B$ (which is a rational homogeneous space, quotient of a Levi part of the group $P'$); for $P'=P$, we get our original bundle $q$. In particular, when $P'$ is a minimal parabolic subgroup properly containing $B$, then $\ol{q}':\ol{E}\to E'$ is a smooth $\P^1$-bundle. Given two parabolic subgroups $P'\subset P''$, containing $B$, the associated bundles and contractions fit in the following commutative diagram:
$$
\xymatrix{\ol{E}\ar[r]_{\ol{q}'}\ar[rd]_(.40){\ol{q}}\ar@/^{5mm}/[rr]^{\ol{q}''}&E'\ar[r]\ar[d]^(.40){q'}&E''\ar[ld]^(.40){q''}\\
&Y&}
$$

In the case $Y \simeq \P^1$, a $\cD$-bundle is determined by the intersection numbers of a minimal section $C$ of the bundle with the relative canonical bundles $K_i$ of the $\P^1$-bundle arising from the minimal parabolic subgroups $P_i\subset G$ properly containing $B$  (\cite[Proposition 3.17]{OSW}). Each of these subgroups corresponds to a node $i$ of the Dynkin diagram $\cD$, therefore we may represent the $\cD$-bundle by the {\it tagged Dynkin diagram} obtained by tagging the node $i$ with the intersection number $K_i\cdot C$ (see \cite[Remark 3.18]{OSW}).


\section{Projective geometry}\label{sec:VMRTC}

\subsection{Setup}\label{set:setup}

Throughout the rest of the paper $X$ will denote a Fano manifold of Picard number one, admitting a beautiful family of rational curves $\cM$,  satisfying the assumptions of Theorem \ref{thm:main}.
We denote by $p:\cU \to \cM$ the universal family, by
 $q:\cU \to X$ the evaluation morphism and by $\tau:\cU \to \P(\Omega_X)$ the tangent map; finally, given    $x \in X$, we denote by $\cM_x$ the fiber $q^{-1}(x)$.

\subsection{Projective geometry of the VMRT}
For every $x\in X$, the variety $\cM_x$ is isomorphic to the blow-up $\pi_{2,x}$ of a $(2n-r-1)$-dimensional projective space $\P(\cF_{2,x})$ along a $(2n-2r-1)$-dimensional subspace $\P(\cQ_x)$. Denoting by $\varphi_x$ the associated linear map $\cF_{2,x}\to\cQ_x$, and setting $\cF_{1,x}:=\ker(\varphi_x)$,  the exceptional divisor of the blow-up map $\cM_x\to\P(\cF_{2,x})$ is isomorphic to $\P(\cF_{1,x})\times\P(\cQ_x)$. Moreover, the blow-up can be seen as a resolution of indeterminacies $\pi_{1,x}:\cM_x\to \P(\cF_{1,x})$ of the linear projection from $\P(\cF_{2,x})$ onto $\P(\cF_{1,x})$. Summing up:
\begin{equation}\label{eq:projgeomC}
\xymatrix{&&\P(\cF_{1,x})\\ 
\P(\cF_{1,x})\times \P(\cQ_x)\,\,\ar@{^{(}->}[r] \ar@/^{4mm}/[rru]^*+<0.5em>{\mbox{\tiny nat. proj.}}\ar[rd]_*+<0.5em>{\mbox{\tiny nat. proj.}}&\cM_x\ar[ru]_{\,\P^{2n-2r}-\mbox{\tiny bdl.}}^{\pi_{1,x}}\ar[rd]^{\mbox{\tiny blow-up}}_{\pi_{2,x}}&\\ &\P(\cQ_x)\,\,\ar@{^{(}->}[r]&\P(\cF_{2,x})\ar@/_{8mm}/@{-->}[uu]_{\mbox{\tiny lin. proj.}}}
\end{equation}

The variety $\cM_x$ is embedded into $\P(\Omega_{X,x})$ via the complete linear system of the line bundle $\pi_{1,x}^*\cO_{\P(\cF_{1,x})}(1)\otimes \pi_{2,x}^*\cO_{\P(\cF_{2,x})}(1)$, whose restriction to $\P(\cF_{1,x})\times \P(\cQ_x)$ provides the Segre embedding of this variety into its linear span, which is  isomorphic to $\P(\cF_{1,x}\otimes\cQ_{x})$. We then have an epimorphism $\Omega_{X,x}\to \cF_{1,x}\otimes\cQ_{x}$ whose kernel is isomorphic to $S^2\cF_{1,x}$, since the linear projection of $\cM_x$ from $\P(\cF_{1,x}\otimes\cQ_{x})$ onto $\P(S^2\cF_{1,x})$ has  the second Veronese embedding of $\P(\cF_{1,x})$ as its image. 

\subsection{Relative projective geometry}\label{ssec:VMRTCglobal}

We will now see how the above constructions can be done relatively over $X$, under the assumptions of Setup \ref{set:setup}.

We start by noting that the automorphism group of $\cM_x$ equals the parabolic subgroup of $\Aut(\P(\cF_{2,x}))$ defined as the stabilizer $\Aut(\P(\cF_{2,x}))_{\P(\cQ_x)}$ of $\P(\cQ_x)$ (that is, the subgroup of automorphisms that leave $\P(\cQ_x)$ invariant). Since this holds for every $x\in X$, the theorem of Fischer and Grauert (cf. \cite[Theorem~I.10.1]{BPVV}) tells us that the variety $\cU$ is determined by a degree one cocycle $\theta$ on $X$ (considered as a complex manifold with the analytic topology) with values in the group $G:=\Aut(\P(\cF_{2,x}))_{\P(\cQ_x)}$. 

This group acts on the varieties $\P(\cQ_x)$  and $\P(\cF_{2,x})$ so that the inclusion $\P(\cQ_x)\hookrightarrow\P(\cF_{2,x})$ is equivariant and the action descends via the corresponding linear  projection to an action on $\P(\cF_{1,x})$, Then we conclude that Diagram $(\ref{eq:projgeomC})$ is a diagram of $G$-modules and equivariant morphisms. Then, denoting by $\cE\to X$ the $G$-principal bundle defined by the cocycle $\theta$, we may construct fiber bundles by means of $\cE$ and the action of $G$ on the above varieties; for instance, $\cE\times^G\cM_x$ is just the variety $\cU$. Then, defining 
$$
\begin{array}{l}
\cU_i:=\cE\times^G\P(\cF_{i,x}),\,\,\,i=1,2,\\
\cZ:=\cE\times^G\P(\cQ_{x}),\\
\HU:=\cU_1\times_X\cZ=\cE\times^G(\P(\cF_{1,x})\times\P(\cQ_{x})).
\end{array}
$$
we have a commutative diagram of fiber bundles on $X$:
$$
\xymatrix{&&\cU_1\ar[rd]^{q_1}&\\ \HU\,\ar@{^{(}->}[r]\ar@<0.5 ex>[rru]+<-2ex,0ex>\ar[rd]&\cU\ar[ru]^{\pi_1}\ar[rr]^q\ar[rd]_{\pi_2}&&X\\ &\cZ\,\ar@{^{(}->}[r]&\cU_2\ar[ru]_{q_2}&}
$$
 
On the other hand, we may consider the embedding $\cU\hookrightarrow \P(\Omega_X)$. Since every element of $G$ extends to a unique automorphism of $\P(\Omega_{X,x})$ (because $\cM_x$ is linearly normal and nondegenerate in this space) we may identify this embedding  with $\cE\times^G\cM_x\hookrightarrow\cE\times^G\P(\Omega_{X,x})$. 
Note that $\pi_1:\cU\to\cU_1$ is a smooth morphisms, with fibers projective spaces, hence it is a projective bundle. Moreover the line bundle $\cO_{\cU}(1)$, restriction of the tautological bundle $\cO_{\P(\Omega_X)}(1)$, has degree one on the fibers of $\pi_1$, hence $\cG:=\pi_{1*}\big(\cO_{\cU}(1)\big)$ is a rank $(2n-2r+1)$ vector bundle on $\cU_1$, whose Grothendieck projectivization is $\cU$.

Analogously, we may consider the projective subbundle $\HU\subset \cU\to\cU_1$; here the push-forward of the tautological line bundle $\cO_{\HU}(1):=\cO_{\cU}(1)_{|\HU}$ to $\cU_1$ provides a vector bundle $\HG$ of rank $2n-2r$, which is a quotient of $\cG$. We denote its kernel by $\cK$, so that we have an exact sequence of vector bundles on $\cU_1$: 
\begin{equation}\label{eq:Ctwo-up}
\shse{\cK}{\cG}{\HG}.
\end{equation} 
Note that, being the VMRT of $X$ at every point isomorphic to the variety $\cC$ (see Section \ref{ssec:statement}), it follows that $\cK$ has degree two on the fibers of $\cU_1\to X$.

\subsection{Restriction to rational curves}\label{ssec:resratcurves}

Let $\ell$ be a fiber of $p:\cU\to \cM$. We will study here the pullback to $\ell$ of the bundles constructed above, that is, the fiber products $\cU\times_X\ell$, ${\cU_i}\times_X\ell$, \dots, that we denote by $\cU_{|\ell}$, ${\cU_1}_{|\ell}$, etc. The main tool here is the fact that being $\ell$ a smooth rational curve, for every complex vector space $V$,  the natural map $H^1(\P^1,\GL(V))\to H^1(\P^1,\PGL(V))$ is surjective. 
Abusing notation, along this section we will denote again by $\theta$ the image of $\theta\in H^1(X,G)$ via the pullback map to $H^1(\ell,G)$.  
Fix a point $x\in q(\ell)$, 
and consider the 
commutative diagram:
$$
\xymatrix{H^1(\ell,\GL(\cF^\vee_{2,x})_{\cQ^\vee_x})\ar[r]\ar[d]&H^1(\ell,\GL(\cF^\vee_{2,x}))\ar@{->>}[d]\\H^1(\ell,\PGL(\cF^\vee_{2,x})_{\P(\cQ_x)})\ar[r]&H^1(\ell,\PGL(\cF^\vee_{2,x}))}
$$
The fact that the image of $\theta$ in $H^1(\ell,\PGL(\cF^\vee_{2,x}))$ can be lifted to $H^1(\ell,\GL(\cF^\vee_{2,x}))$, and that $\GL(\cF^\vee_{2,x})_{\cQ^\vee_x}\subset\GL(\cF^\vee_{2,x})$, 
implies that $\theta$ can be lifted to a cocycle $\theta'\in H^1(\ell,\GL(\cF^\vee_{2,x})_{\cQ^\vee_x})$. 

Via the natural homomorphisms $\GL(\cF^{\,\vee}_{2,x})_{\cQ^\vee_x}\to \GL(\cF^{\,\vee}_{2,x})$, $\GL(\cF_{2,x}^{\,\vee})_{\cQ^\vee_x}\to \GL(\cQ^\vee_x)$, $\GL(\cF^{\,\vee}_{2,x})_{\cQ^\vee_x}\to \GL(\cF^{\,\vee}_{1,x})$, the cocycle $\theta'$ defines three vector bundles over $\ell$, which we denote  by  $\cF_2^{\,\vee}$, $\cQ^{\,\vee}$, and $\cF_1^{\,\vee}$, respectively; the dual bundles fit into a short exact sequence: 
\begin{equation}\label{eq:Cone}
\shse{\cF_1}{\cF_2}{\cQ }.
\end{equation}

Note that the lifting $\theta'$ of $\theta$ is not unique; different choices of $\theta'$ provide different twists of the bundle $\cF_2$ and, subsequently, of the bundles $\cF_1$ and $\cQ$.

Now we consider also the action of $G$ on $\P(\Omega_{X,x})$. We may define a homomorphism of algebraic groups $\eta:\GL(\cF^{\,\vee}_{2,x})_{\cQ^\vee_x}\to \GL(T_{X,x})$ such that we have a commutative diagram:
$$
\xymatrix{\GL(\cF^\vee_{2,x})_{\cQ^\vee_x}\ar[r]^{\eta}\ar[d]&\GL(T_{X,x})\ar@{->}[d]\\G\,\ar@{>->}[r]&\PGL(T_{X,x})}
$$
In fact, an element $g$ of $\GL(\cF^\vee_{2,x})_{\cQ^\vee_x}$ is given (after choosing an appropriate basis of $T_{X,x}$) by a matrix of the form:
$$\left(\begin{array}{c|c}A_1&A_2\\\hline 0&A_3\end{array}\right),
$$
where $A_3$ defines the image of $g$ in $\GL(\cF^\vee_{1,x})$. Then it is enough to define $\eta(g)$ as given by the matrix:
$$
\left(\begin{array}{c|c}A_3\otimes A_1&A_3\otimes A_2\\\hline 0&A_3\otimes A_3\end{array}\right),
$$
where $\otimes$ denotes here the Kronecker product of matrices. Note that the map $\eta$ is not injective, but its kernel has order two.

Now, the image of $\theta'\in H^1(\ell,\GL(\cF^\vee_{2,x})_{\cQ^\vee_x})$ in $H^1(\ell,\GL(\Omega_{X,x}))$, given by the dual of the action of $\GL(\cF^\vee_{2,x})_{\cQ^\vee_x}$ on $T_{X,x}$, defines a vector bundle on $\ell$ whose projectivization is $\P({\Omega_{X}}_{|\ell})$, hence a bundle of the form ${\Omega_X}_{|\ell}\otimes\cO_\ell(d)$, for some $d\in\Z$. On the other hand, by construction, this bundle fits into the following 
commutative diagram, with short-exact rows and columns:
\begin{equation}
\xymatrix@=30pt{
\bigwedge^2\cF_1\ar[d]\ar@{=}[r]&\bigwedge^2\cF_1\ar[d]&\\
\bigotimes^2\cF_1\ar[r]\ar[d]&\cF_1\otimes \cF_2\ar[r]\ar[d]&\cF_1\otimes \cQ \ar@{=}[d]\\
S^2\cF_1\ar[r]&{\Omega_X}_{|\ell}\otimes\cO_\ell(d)\ar[r]&\cF_1\otimes \cQ \\
}\label{eq:diagram}
\end{equation}
Then, by choosing the lift $\theta'$ of $\theta$ appropriately, we may assume that $d=0,1$.

Let us denote by $\cL_i$ the tautological line bundles of ${\cU_i}_{|\ell}=\P(\cF_i)$, satisfying ${q_i}_*(\cL_i)=\cF_i$, for $i=1,2$, and by $\cL$ the restriction to $\cU_{|\ell}\subset\P({\Omega_X}_{|\ell}\otimes\cO_\ell(d))$ of the tautological line bundle whose push-forward to $\ell$ is ${\Omega_X}_{|\ell}\otimes\cO_\ell(d)$.
Then we may  write:
\begin{equation}\label{eq:Cblow}
\cL\cong \pi_1^*\cL_1\otimes \pi_2^*\cL_2.
\end{equation}
Moreover, we have the following relation, coming from the exact sequence (\ref{eq:Cone}):
$$
\pi_2^*\cL_2\cong\pi_1^*\cL_1\otimes\cH,
$$
where $\cH$ denotes the restriction to $\cU_{|\ell}$ of $\cO_\cU(\HU)$. In particular:
\begin{equation}\label{eq:Cexc}
\cH\cong\cL\otimes\pi^*_1\cL_1^{-2}.
\end{equation}
We finish by noting that, restricting to $\ell$ via the natural section $\ell \to \cU_{|\ell}$ induced by the inclusion $\ell \hookrightarrow \cU$, we may compute
\begin{eqnarray}
\cO_{\cU}(K_{\cU/X})\cdot \ell &=& \dim \cM_x =2n-r-1\label{eq:Cint3}\\
\label{eq:Cint1} \cL \cdot \ell &=& d-2\\ 
\label{eq:Cint2} \cH \cdot \ell &=& d-2 -2\pi_1^*\cL_1\cdot \ell
\end{eqnarray}

\section{The family of isotropic lines in $X$}\label{sec:isotrop}

The main goal of this section is to show that the variety $\HU$ is a $\P^1$-bundle over its image $\HM$ in $\cM$, which will then be the natural candidate for the family of isotropic lines in $X$. This will be done by showing that the intersection number of $\cO_{\cU}(\HU)$ with a fiber $\ell$ of $p$ is zero. 


\begin{lemma}\label{lem:c1s} With the same notation as in Section 3, 
$$\deg(\cF_2)  = -1 -(\pi_1^*\cL_1\cdot \ell+1)(2n-3r+2) + (2n-2r+1)d$$
\end{lemma}

\begin{proof}
 We write the relative canonical $\cO_{\cU}(K_{\cU/X})$ as 
\begin{eqnarray*}
 \cO_{\cU}(K_{\cU/X})& = &\cO_{\cU}(K_{\cU/\cU_2})  \otimes \pi_2^*\cO_{\cU_2}(K_{\cU_2/X})\\
 & = &\cO_{\cU}((r-1)\HU) \otimes \pi_2^*\cO_{\cU_2}(K_{\cU_2/X})
\end{eqnarray*} 
Now, restricting to $\cU_{|\ell}$, we get 
\begin{eqnarray*}
( \cO_{\cU}(K_{\cU/X}))_{|\ell}&=&\cH^{r-1}\otimes\pi_2^*\cL_2^{-2n+r}\otimes q^*\det(\cF_2)\\
& = &\cL^{-2n+2r-1}\otimes\pi^*_1\cL_1^{2n-3r+2} \otimes  q^*\det(\cF_2),
\end{eqnarray*} 
where the last equality follows from (\ref{eq:Cexc}).
Now we restrict to $\ell$ via the natural section $\ell \to \cU_{|\ell}$, and obtain the stated formula by (\ref{eq:Cint3}) and (\ref{eq:Cint1}).
\end{proof}

\begin{lemma}\label{lem:c1s2} With the same notation as in Section 3,
$$ \deg(\cF_1)  = -1+r(\pi_1^*\cL_1\cdot \ell+1)+\dfrac{r(r-1)}{2(2n-2r+1)}(\cH \cdot \ell)$$
\end{lemma}

\begin{proof} 
From the second column of diagram (\ref{eq:diagram}) we get
$$ \det(\Omega_X|_{\ell}) \otimes\cO_\ell( d\dim X)=\det(\cF_2)^{\otimes r} \otimes \det(\cF_1)^{\otimes 2n-2r+1},$$
which gives
$$(2n-2r+1) \deg(\cF_1)=(-2n+2r-1) +d\dim X- r\deg(\cF_2)-r;$$
substituting $\dim X= 2nr -(3r^2-r)/2$, with simple computations we get
$$ \deg(\cF_1)=-1 + d\dfrac{r(r-1)}{2(2n-2r+1)}+r(\pi_1^*\cL_1\cdot \ell+1)\left(1 + \dfrac{1-r}{2n-2r+1}\right)$$
which can be rewritten as
$$ \deg(\cF_1)=-1 +r(\pi_1^*\cL_1\cdot \ell+1)+ \dfrac{r(r-1)}{2(2n-2r+1)}\left(d-2 -2\pi_1^*\cL_1\cdot \ell\right)$$
%
%
and we conclude by formula (\ref{eq:Cint2}).
\end{proof}

\begin{lemma}\label{lem:familyU1}
Let $\ell$ be a fiber of $p$ and denote by $\ell_1$ its image in $\cU_1$. Then $\ell_1$ is a free rational curve, and so there exists a unique irreducible component $\cM_1$  of $\rat(\cU_1)$ containing the class of $\ell_1$. Moreover the natural morphism $\cM \to \cM_1$ is injective, and $-K_{\cU_1} \cdot \ell_1 \ge 2n -2r+2$, equality holding if and only if $\cM \to \cM_1$ is surjective.
\end{lemma}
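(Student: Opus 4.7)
My plan is to establish the four parts of the lemma in the order they are stated, using the smooth $\P^1$-bundle structure of $p$ together with the hypothesized VMRT structure of $\cM_x$.

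For the first part, since $q=q_1\circ\pi_1$ and $q|_{\ell'}\colon \ell'\to \ell\subset X$ is the birational parametrization of the rational curve $\ell$, the map $\pi_1|_{\ell'}$ is non-constant and birational onto its image $\ell_1$. Restricting the relative tangent sequence
\[
0\to T_{\cU/\cU_1}\to T_\cU\to \pi_1^*T_{\cU_1}\to 0
\]
to the fiber $\ell'$, the bundle $T_\cU|_{\ell'}\cong \cO_{\P^1}(2)\oplus \cO_{\P^1}^{\oplus \dim \cM}$ is nef, hence so is its quotient $\pi_1^*T_{\cU_1}|_{\ell'}$. Because $\pi_1|_{\ell'}$ is the normalization of $\ell_1$, this says exactly that $\ell_1$ is a free rational curve; freeness in turn implies that $\rat(\cU_1)$ is smooth at $[\ell_1]$, giving the unique irreducible component $\cM_1$ through it. For the injectivity statement, if distinct $c_1,c_2\in\cM$ produced the same $\ell_1\subset \cU_1$, then applying $q_1$ would give $q(p^{-1}(c_1))=q(p^{-1}(c_2))$ as cycles in $X$, contradicting the injection $\cM\hookrightarrow \Chow(X)$ recorded in Remark~\ref{rem:beautifulfree}.

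For the numerical bound, I would fix a general $y=(x,[H])\in \cU_1$ and compare two dimensions. A curve $\ell_1$ coming from $c\in\cM$ passes through $y$ precisely when $\ell=q(p^{-1}(c))$ passes through $x$ and the VMRT class of $\ell$ at $x$ maps via $\pi_{1x}$ to $[H]$; hence the locus of such $c$ in $\cM$ is the fiber $\pi_{1x}^{-1}([H])\subset \cM_x$. By the hypothesis that $\cM_x$ is a $\P^{2n-2r}$-bundle over $\P(\cF_{1,x})$ via $\pi_{1x}$, this fiber is isomorphic to $\P^{2n-2r}$; injectivity embeds it into the space $(\cM_1)_y$ of curves of $\cM_1$ through $y$. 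Since $\ell_1$ is free, $N_{\ell_1/\cU_1}$ is nef on $\P^1$ and deformations of $\ell_1$ through a general point are unobstructed, yielding $\dim (\cM_1)_y=-K_{\cU_1}\cdot \ell_1-2$. Combining,
\[
2n-2r\le -K_{\cU_1}\cdot \ell_1-2,
\]
which is the desired bound. Equality is then equivalent to $\dim \cM=\dim \cM_1$, which, by injectivity, the properness of $\cM$, and the irreducibility of $\cM_1$, is in turn equivalent to $\cM\to\cM_1$ being surjective. The step I expect to require the most care is the unobstructedness of $(\cM_1)_y$ at its expected dimension, together with the precise identification of the image of $\cM$ through $y$ with the explicit fiber $\pi_{1x}^{-1}([H])$; both steps rely essentially on the hypothesis that $\tau$ realizes the VMRT structure pointwise on $X$, which is why this lemma is the first place in the paper where the full force of the VMRT assumption (rather than just its generic form) enters.
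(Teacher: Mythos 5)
Your proposal is correct and follows essentially the same route as the paper: freeness of $\ell_1$ as a nef quotient of $(T_{\cU})_{|\ell'}$, injectivity via the embedding of $\cM$ into $\Chow(X)$, and a dimension count against Koll\'ar's formula using the fact that $\pi_1$ has $\P^{2n-2r}$-fibers. The only (immaterial) difference is that you count curves through a general point of $\cU_1$, whereas the paper compares the global dimensions $\dim \cM_1 = -K_{\cU_1}\cdot\ell_1+\dim\cU_1-3$ and $\dim\cM=\dim\cU_1+2n-2r-1$; note only that $\pi_{1x}^{-1}([H])$ lives in $\cU$, so one should pass to its image under $p$ (a finite map on that fiber) before embedding it into $(\cM_1)_y$.
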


\begin{proof}
The freeness of $\ell_1$ can be proved as in Remark \ref{rem:beautifulfree}, and it implies the smoothness of the scheme $\rat(\cU_1)$ at the point corresponding to $\ell_1$ (\cite[Theorem.~II.1.7, Theorem.~II.2.15]{kollar}). In particular, there is a unique component  containing $\ell_1$, that we denote by $\cM_1\subset\rat(\cU_1)$.
Since fibers of $p$ are mapped to different curves by $q$ (see Remark \ref{rem:beautifulfree}), and therefore by $\pi_1$, the natural morphism $\cM \to \cM_1$ is injective.
By \cite[Theorem. II.1.2]{kollar}) we have $\dim \cM_1 = -K_{\cU_1} \cdot \ell_1 + \dim \cU_1 -3$. 
Finally, being $\dim \cM= \dim \cU-1= \dim \cU_1 + 2n-2r -1$, the last assertion follows.
\end{proof}

\begin{corollary}\label{cor:linembed} With the same notation as above, we have
$$ \cO_{\cU}(\HU)\cdot \ell = d = \pi_1^*\cL_1 \cdot \ell +1= 0.$$
Then $\HM = p(\HU)$ parametrizes a subfamily of rational curves in $X$, which has codimension one in $\cM$, and the natural map $\cM \to \cM_1$ is an isomorphism. In particular  $\cM_1$ is a beautiful family of rational curves on $\cU_1$.\end{corollary}

%
\begin{proof} 
Since $-q^*K_{X} \cdot \ell=2n-r+1$, Lemma \ref{lem:familyU1} provides
\begin{equation}\label{eq:ineq}
1-r \le -\pi_1^*K_{\cU_1/X} \cdot \ell = -\pi_1^*K_{{\cU_1}_{|\ell}/\ell} \cdot \ell=r\pi_1^*\cL_1\cdot \ell-\deg(\cF_1),
\end{equation}
from which we get
$$\deg(\cF_1) \le r\big(\pi_1^*\cL_1\cdot \ell+1\big)-1.$$
Combining this with the expression of $\deg(\cF_1)$ obtained in Lemma \ref{lem:c1s} we get
$ \cO_\cU(\HU) \cdot \ell=\cH \cdot \ell \le 0$. On the other hand $\cO_\cU(\HU) \cdot \ell \ge 0$, since $\ell$ belongs to a dominating family and $\HU$ is an effective divisor, hence $\cO_\cU(\HU) \cdot \ell = 0$.
By formula (\ref{eq:Cint2}) we then have that $d$ is even, hence $d=0$, and $\pi_1^*\cL_1 \cdot \ell =-1$. 

Finally, by Lemma \ref{lem:c1s2}, these equalities provide $\deg(\cF_1)=-1$, and, together with (\ref{eq:ineq}), we get $-K_{\cU_1/X} \cdot \ell_1=1-r$. Then $-K_{\cU_1} \cdot \ell_1 = 2n -2r+2$, so,  by Lemma \ref{lem:familyU1} the morphism $\cM \to \cM_1$ is surjective. Since $\cM$ is smooth and $\cM_1$ is normal, then $\cM \simeq \cM_1$. In particular $\cM_1$ is unsplit; it is, moreover, beautiful, since its evaluation morphism $\pi_1$ is surjective and smooth.\end{proof}

\begin{remark}\label{rem:linVMRT}
Note that the VMRT of the family $\cM_1$ at every point $P$ 
of $\cU_1$ is a linear subspace of $\P(\Omega_{\cU_1,P})$. In fact  setting $x:=q_1(P)$ and $L:=q_1^{-1}(x)$, the tangent map $\tau_x:\cM_x\to \P(\Omega_{X,x})$ factors via $\tau_1:\cM_x\to\P({\Omega_{\cU_1}}_{|L})$: 
$$
\xymatrix@C=15mm{\cM_x\ar[r]_(.40){\tau_1}\ar@/^{7mm}/[rrr]^{\tau_x}\ar[dr]&\P({\Omega_{\cU_1}}_{|L})\ar@{-->}[r]\ar[d]&\P(\Omega_{X,x})\times L\ar[r]\ar[dl]&\P(\Omega_{X,x})\\&L&&}
$$
The tangent map of $\cM_1$ at $P$ is the restriction of $\tau_1$ to the fiber $\cM_{1,P}$ of $\cM_x\to L$ over $P$. Since $\cM_{1,P}$ goes one-to-one onto a linear subspace of $\P(\Omega_{X,x})$, and the second horizontal map is a linear projection (at every fiber over $L$), it follows that $\tau_1(\cM_{1,P})$ is also linear.
\end{remark}


\section{Reduction of the defining group}\label{sec:reduc}

As we have seen in Section \ref{ssec:VMRTCglobal}, the variety $\HU$ is a projective bundle $\P(\HG)$ over $\cU_1$. Given a point $x\in X$, the fiber of $\P(\HG)\to\cU_1$ over a point $y\in\pi_1^{-1}(x)$ is isomorphic to $\P(\cQ_x)$, so the vector bundle $\HG$ is defined by a  cocycle $\theta''\in H^1(\cU_1,\GL(\cQ_x))$. We will prove that there exists a skew-symmetric isomorphism $\omega_x:\cQ^\vee_x \to\cQ_x$, such that $\theta''\in H^1(\cU_1,\SP(\cQ^\vee_x))$, where $\SP(\cQ^\vee_x)$ denotes the subgroup  of $\GL(\cQ^\vee_x)$ preserving $\omega_x$. It is enough to prove that there exists a skew-symmetric isomorphism $\omega:\HG^\vee\to\HG \otimes \cB$, for some $\cB \in \Pic(\cU_1)$. We start by showing that $\cU_1$ admits a second contraction, which is a projective bundle. 

\subsection{Projective bundle structure of $\bm{\cU_1}$}

It is known (\cite[Theorem 1.1]{Ar}) that if a smooth complex projective variety $Y$ has a family of minimal rational curves whose VMRT at a general point is a $k$-dimensional linear subspace, then there is a dense open subset $Y^\circ \subset Y$ and a 
$\P^{k+1}$-fibration $\rho^\circ:Y^\circ \to W^\circ$ such that the general element of the family is a line in a fiber of $\rho^\circ$.
 The following Lemma shows that, under stronger assumptions, the fibration is defined on  the whole $Y$.

\begin{lemma}\label{lem:fibration}
Let $Y$ be a smooth variety, and let $p:\cV\to \cN$ be a beautiful family of rational curves, satisfying that the tangent map is regular on $\cV$, and the VMRT at every point $y \in Y$ is a linearly embedded $\P^k \subset \P(\Omega_{Y,y})$. Then there exists an equidimensional morphism $\rho:Y \to \cW$, which contracts curves parametrized by $\cN$, whose general fiber is a projective space of dimension $k+1$.
\end{lemma}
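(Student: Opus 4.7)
The plan is to upgrade Araujo's rational fibration to an honest morphism on all of $Y$, exploiting the strengthened hypotheses that $\tau$ is regular on all of $\cV$ and that the VMRT is linearly embedded at \emph{every} point.

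First, since $\tau$ is a morphism and each VMRT $\tau(q^{-1}(y))\subset\P(\Omega_{Y,y})$ is a linearly embedded $\P^k$ of constant dimension, their linear spans globalize to a rank-$(k+1)$ quotient of $\Omega_Y$; its annihilator $D\subset T_Y$ is a regular sub-bundle of rank $k+1$, whose fiber $D_y$ is the affine cone over the VMRT at $y$. Applying \cite[Theorem 1.1]{Ar}, one obtains a dense open $Y^\circ\subset Y$ and a $\P^{k+1}$-fibration $\rho^\circ:Y^\circ\to W^\circ$ whose fibers contain members of $\cN$ as lines, and the relative tangent bundle $T_{Y^\circ/W^\circ}$ coincides with $D|_{Y^\circ}$; so $D|_{Y^\circ}$ is involutive. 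The Frobenius obstruction $\bigwedge^2 D\to T_Y/D$ is an $\cO_Y$-linear map of locally free sheaves, so its vanishing on the dense open $Y^\circ$ implies vanishing on $Y$, and $D$ is a regular integrable distribution on all of $Y$.

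Next, I would globalize the fibration via the Chow variety. Let $\cF$ be the irreducible component of $\Chow(Y)$ containing the cycle classes of the fibers of $\rho^\circ$, let $\cZ\to\cF$ be the universal family, and let $e:\cZ\to Y$ be the evaluation. Then $e$ is proper, and its restriction over $Y^\circ$ is an isomorphism, so $e$ is proper birational and surjective. To conclude that $e$ is an isomorphism---which then gives the desired $\rho:=q_\cF\circ e^{-1}:Y\to\cF$---it suffices by Zariski's Main Theorem to show that $e$ has finite fibers. Suppose, for a contradiction, that two distinct members $Z_1,Z_2\in\cF$ share a point $y\in Y$. Each $Z_i$ is an irreducible $(k+1)$-dimensional subvariety meeting $Y^\circ$ along a fiber of $\rho^\circ$, so its smooth locus $Z_i^{\mathrm{sm}}$ is an integral manifold of $D$ (tangent spaces equal $D$ on the dense open $Z_i\cap Y^\circ$ by construction, and by continuity of $D$ elsewhere). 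The analytic Frobenius theorem for the regular distribution $D$ at $y$ then forces $Z_1$ and $Z_2$ to coincide in an analytic neighborhood of $y$, and by irreducibility $Z_1=Z_2$, a contradiction. The resulting $\rho$ is equidimensional of relative dimension $k+1$ since $\rk D=k+1$ is constant, its general fiber is $\P^{k+1}$ by Araujo, and members of $\cN$ have tangents in $D=\ker d\rho$ and are therefore contracted.

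The main obstacle is the final injectivity check: without the regularity of $D$ throughout $Y$, the Chow-theoretic extension might acquire colliding fibers over $Y\setminus Y^\circ$. The essential input ruling this out is precisely the hypothesis that the VMRT is linear at \emph{every} point (not merely generically), which produces a regular sub-bundle $D\subset T_Y$; if $D$ were only generically regular, rank drops could induce foliation singularities and break Frobenius uniqueness.
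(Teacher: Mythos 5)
Your route is genuinely different from the paper's. The paper never introduces the distribution $D$ or the Chow variety: it first notes that all fibers $\cN_y$ are $\P^k$ by smooth rigidity, then uses the structure of $\cV_y$ as a blow-up of $\P^{k+1}$ at a point (as in \cite[Lemma 2.3]{AC}) together with freeness of the curves to prove, for \emph{every} pair of points, that $z\in\loc(\cN_y)$ iff $\loc(\cN_y)=\loc(\cN_z)$; this makes all equivalence classes of the rationally connected quotient associated with $\cN$ have the same dimension $k+1$, so that \cite[Proposition 1]{BCD} upgrades the quotient to an equidimensional morphism, and the general fiber is identified as $\P^{k+1}$ via $-K_Y\cdot g(\ell)=k+2$ and \cite{CMSB}. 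Your first steps (the rank-$(k+1)$ subbundle $D\subset T_Y$ obtained from the everywhere-defined tangent map, the identification $T_{Y^\circ/W^\circ}=D|_{Y^\circ}$, and the extension of integrability across $Y\setminus Y^\circ$ by $\cO_Y$-linearity of the Frobenius tensor) are sound and are an attractive alternative starting point.

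The gap is in the Chow-theoretic globalization. You assert that $e$ restricts to an isomorphism over $Y^\circ$ (this is not automatic: a limit cycle in $\cF$ may well meet $Y^\circ$, so birationality of $e$ already requires the uniqueness statement you prove afterwards), and, more seriously, your injectivity argument treats an arbitrary member $Z\in\cF$ as an \emph{irreducible, reduced} $(k+1)$-dimensional subvariety that \emph{meets $Y^\circ$ along a fiber of $\rho^\circ$}. None of this holds a priori for a Chow limit of fibers: $Z$ may be reducible or carry multiplicities, and may be entirely contained in $Y\setminus Y^\circ$, in which case the claim that its smooth locus is an integral manifold of $D$ has no justification (your ``by continuity of $D$ elsewhere'' only propagates tangency from a dense open subset of $Z$ where it is already known). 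To close this you would need the standard but nontrivial fact that limits of $D$-integral cycles have all their components generically tangent to $D$, and then an additional argument excluding reducible or multiple limit members (two distinct members could share a common component, i.e.\ a common leaf closure, while differing elsewhere, which your leafwise uniqueness at a single point does not rule out). Note also that your Frobenius uniqueness step implicitly assumes $y$ is a smooth point of both $Z_1$ and $Z_2$; at a singular point one must argue branch by branch. These are exactly the difficulties the paper sidesteps by working with $\loc(\cN_y)$, which is \emph{defined} at every point directly from the family and is irreducible of dimension $k+1$ by construction.
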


\begin{proof}
If the Picard number of $Y$ is one, then $Y$ is a projective space by \cite[Proposition 5]{Hw2} (or by a simple application of \cite[Theorem 1.1]{Ar}), so we can assume that $\rho_Y \ge 2$.
Let $q:\cV\to Y$ be the evaluation morphism associated with $\cN$. For a general $y \in Y$, since $\tau_y$ is the normalization of its image, we get that  $\cN_y \simeq \P^k$. Then all the fibers of $q$ are isomorphic to $\P^k$ by the smooth rigidity of the projective space (see \cite{Siu2}).

Given any point $y \in Y$, consider the $\P^1$-bundle $p_y:\cV_y\to\cN_y$ defined as the fiber product of $p:\cV\to\cN$ and $p_{|\cN_y}:\cN_y\to\cN$, whose image into $Y$ via the evaluation morphism is $\loc(\cN_y):=q(p^{-1}(p(\cN_y)))$; then   
$\loc(\cN_y)$ is an irreducible closed subset of $Y$ of dimension $k+1$.

We claim first that, given two points $y,z \in Y$ then $z \in \loc(\cN_y)$ if and only if $\loc(\cN_y)=\loc(\cN_z)$.

As in  \cite[Lemma 2.3]{AC} one can prove that $\cV_y$ is the blow-up of a $\P^{k+1}$ at a point, and that there is a generically injective map $g:(\P^{k+1},y') \to (\loc(\cN_y),y)$ which maps lines by $y'$ birationally to curves parametrized by $\cN_y$. 
Let $z'$ be a preimage  of $z$ via $g$ and let $\ell$ be the line passing by $y'$ and $z'$; then lines by $z'$  are mapped to curves in $Y$ passing by $z$ which are algebraically equivalent to $g(\ell)$.
Therefore these curves belong to a component of $\rat(Y)$ containing $g(\ell)$. Since this component is unique by the freeness of $g(\ell)$,  they belong to $\cN$, hence $\loc(\cN_y) \subset \loc(\cN_z)$. 
Since both the loci are irreducible and of the same dimension the claim follows.

Consider now the rationally connected fibration $\rho:\xymatrix{Y \ar@{-->}[r]& \cW}$ associated with $\cN$; by the claim every equivalence class has the same dimension, hence, by \cite[Proposition 1]{BCD} $\rho$ is an (equidimensional) morphism.

By \cite[Theorem II.1.2]{kollar} we have $\dim \cN=-K_Y \cdot g(\ell) + \dim Y - 3$, so $-K_Y \cdot g(\ell)=k+2$. By adjunction and by \cite{CMSB}, the general fiber is  a projective space of dimension $k+1$. 
\end{proof}

We now apply Lemma \ref{lem:fibration} to the case $Y=\cU_1$ and $\cV=\cM_1$, obtaining:

\begin{corollary}
There exist a smooth variety $\cW$ and a smooth morphism $\rho:\cU_1 \to \cW$, which contracts curves parametrized by $\cM_1$, whose fibers are projective spaces of dimension $2n-2r+1$.
\end{corollary}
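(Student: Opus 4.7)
The strategy is to apply Lemma \ref{lem:fibration} to $Y=\cU_1$ equipped with the beautiful family $\cN=\cM_1$, with parameter $k=2n-2r$, and then to promote the equidimensional contraction it delivers to the smooth $\P^{2n-2r+1}$-bundle structure asserted in the corollary.

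First I would verify the hypotheses of Lemma \ref{lem:fibration}. The variety $\cU_1=\P(\cF_1)$ is smooth, being a projective bundle over smooth $X$. By Proposition \ref{prop:fullfam}, $\cM_1$ is a beautiful family of rational curves on $\cU_1$. As displayed in Remark \ref{rem:linVMRT}, the tangent map of $\cM_1$ on its universal family is a factor of the tangent map $\tau$ of $\cM$, which is a morphism by the hypothesis of Theorem \ref{thm:main}; hence the tangent map of $\cM_1$ is regular everywhere. The same Remark identifies the VMRT of $\cM_1$ at any $P\in\cU_1$ with the image of $\cM_{1,P}\simeq\P^{2n-2r}$ under an injective linear projection, so it is a linearly embedded $\P^{2n-2r}$ inside $\P(\Omega_{\cU_1,P})$. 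Lemma \ref{lem:fibration} then produces an equidimensional morphism $\rho:\cU_1\to\cW$ contracting the curves of $\cM_1$, whose general fiber is $\P^{2n-2r+1}$.

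The remaining step, which I expect to be the main obstacle, is to upgrade this to: every fiber of $\rho$ is $\P^{2n-2r+1}$, $\rho$ is smooth, and $\cW$ is smooth. I would proceed fiberwise. Fix $P\in\cU_1$ and let $F_P=\loc(\cN_P)$ be the associated fiber of $\rho$. Beautifulness of $\cM_1$ makes the variety $\cV_P$ from the proof of Lemma \ref{lem:fibration} a smooth $\P^1$-bundle over $\cN_P\simeq\P^{2n-2r}$, and the restricted evaluation $\cV_P\to F_P$ is birational, contracting the exceptional $\cN_P$ to $P$ and being an isomorphism elsewhere. Smoothness of $F_P$ away from $P$ is immediate; at $P$ it should follow from matching the local description of $\cV_P\to F_P$ to that of the blow-up of a smooth variety at a point, the local model being forced because $\cV_P$ is smooth and $F_P$ is a fiber of the equidimensional $\rho$ with smooth total space $\cU_1$. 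Once smoothness of $F_P$ in dimension $2n-2r+1$ is established, the adjunction computation used in the proof of Lemma \ref{lem:fibration} gives $-K_{F_P}\cdot g(\ell)=\dim F_P+1$, so \cite{CMSB} applies fiberwise and yields $F_P\simeq\P^{2n-2r+1}$. Finally, miracle flatness (using that $\cU_1$ is Cohen--Macaulay, and that $\cW$ may be assumed normal via Stein factorization of $\rho$) promotes equidimensionality to flatness; combined with smooth fibers this yields smoothness of $\rho$, and smoothness of $\cU_1$ together with smoothness of $\rho$ then forces $\cW$ to be smooth.
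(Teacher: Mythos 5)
Your reduction to Lemma \ref{lem:fibration} is exactly the paper's first step, and your verification of its hypotheses (smoothness of $\cU_1$, Proposition \ref{prop:fullfam} for beautifulness of $\cM_1$, Remark \ref{rem:linVMRT} for the everywhere-defined linear VMRT of dimension $2n-2r$) is correct. The problem is the ``upgrade'' step, which you rightly flag as the main obstacle but do not actually carry out. First, smoothness of an arbitrary fiber $F_P$ is not established: the map $g:\P^{2n-2r+1}\to\loc(\cM_{1,P})$ produced in the proof of Lemma \ref{lem:fibration} is only \emph{generically} injective, so ``smoothness of $F_P$ away from $P$ is immediate'' does not hold --- the image of a smooth variety under a generically injective morphism can be singular (self-intersections, pinch points, non-normal points), and nothing in your argument excludes this; likewise, saying the local model at $P$ is ``forced'' is an assertion, not a proof, since fibers of an equidimensional morphism with smooth total space need not be smooth or even reduced. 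Second, the flatness step is circular: miracle flatness requires the \emph{base} to be regular (Cohen--Macaulay total space, regular base, equidimensional fibers $\Rightarrow$ flat); normality of $\cW$ obtained from a Stein factorization is not sufficient, and smoothness of $\cW$ is precisely part of what you are trying to prove.

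The paper closes this gap by a different and much shorter device: by Corollary \ref{cor:numerology}~(1), the line bundle $q_1^*\det(\cF_1^\vee)$ has degree $1$ on the curves of $\cM_1$, hence restricts to $\cO(1)$ on a general fiber $\P^{2n-2r+1}$ of $\rho$; an equidimensional morphism equipped with a line bundle restricting to $\cO(1)$ on the general projective-space fiber is a projective bundle over a smooth base by \cite[Lemma 2.12]{Fuj1}. This yields all the asserted properties at once (every fiber is $\P^{2n-2r+1}$, $\rho$ is smooth, $\cW$ is smooth) without any fiberwise analysis. If you wish to avoid citing Fujita's lemma, you would need to supply genuine proofs of the two points above; as written, the proposal does not.
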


\begin{proof} By Corollary \ref{cor:linembed} and Lemma \ref{lem:fibration}, there exists an equidimensional morphism $\rho:\cU_1 \to  \cW$, which contracts curves parametrized by $\cM_1$, whose general fiber is a projective space of dimension $2n-2r+1$. By \cite[Theorem 1.3]{HN}, $\rho$ is a projective bundle and $\cW$ is smooth.
\end{proof}

\subsection{Skew-symmetric form on $\bm{\HG}$}

We may now prove the following:

\begin{proposition}\label{prop:Uasrel} There exists a skew-symmetric isomorphism 
$$\omega:\HG^\vee\to\HG \otimes \cB,$$ 
for some $\cB \in \Pic(\cU_1)$.
\end{proposition}

\begin{proof}
By construction, since $\P(\cG)=\cU$ may be described as the universal family of lines in the fibers of $\rho:\cU_1\to \cW$, it follows that $\P(\cG)=\P(\Omega_{\cU_1/\cW})$. In particular there exists a line bundle $\cA_1$ on $\cU_1$ such that the corresponding tautological line bundles satisfy
$$\cO_{\P(\cG)}(1)=\cO_{\P(\Omega_{\cU_1/\cW})}(1)\otimes \pi_1^*\cA_1.$$
Since both $\cO_{\P(\cG)}(1)$ and $\cO_{\P(\Omega_{\cU_1/\cW})}(1)$ have degree $-2$ on a fiber of $p$, which is mapped to a line in a fiber of $\rho$,
there exists $\cA \in \Pic(\cW)$ such that $\cA_1= \rho^*\cA$, so $\cG=\Omega_{\cU_1/\cW} \otimes \rho^*\cA$.
In particular, the dual of the exact sequence (\ref{eq:Ctwo-up}) tensored with $\rho^*\cA$ is:
$$0 \to \HG^\vee \otimes \rho^*\cA \lra T_{\cU_1/\cW} \lra \cK^\vee\otimes \rho^*\cA \to 0.$$
But the sequence (\ref{eq:Ctwo-up}) tells us also that $\cO_\cU(\HU)= \cO_{\P(\cG)}(1) \otimes \pi_1^*\cK^\vee$, so, by Corollary \ref{cor:linembed}, $\pi_1^*\cK^\vee \cdot \ell =\cK^\vee \cdot \ell_1=2$.
Then the above sequence, restricted to a fiber $P\cong \P^{2n-2r+1}$ of $\rho$ becomes:
$$0 \to \HG^\vee_{|P} \to T_{P} \to \cO_{P}(2)\to 0.$$
The vector bundle $\cD:= \HG^\vee \otimes \rho^*\cA$ defines a distribution in $\cU_1$, contained in the relative tangent bundle $T_{\cU_1/\cW}$. Hence, we may consider the O'Neill skew-symmetric tensor (induced by the Lie bracket of holomorphic sections of $\cD$):
$$
\omega:\bigwedge^2\cD\to \dfrac{T_{\cU_1/\cW}}{\cD},
$$
providing a homomorphism of sheaves (that we denote also by $\omega$):
$$
\omega:\cD\to \cD^\vee\otimes\cK^\vee\otimes \rho^*\cA.
$$
In order to prove that $\omega$ is an isomorphism, we may check it on the restriction to every fiber $P\cong \P^{2n-2r+1}$ of $\rho$.  But on each of these fibers, $\cD_{|P}$ is the kernel of a surjective map $T_{P} \to \cO_{P}(2)$, hence it is a contact distribution (see \cite[Example~2.1]{LB}), and then we know that the O'Neill tensor  $\omega_{|P}:\cD_{|P}\to \cD^\vee_{|P}\otimes\cO_{P}(2)$ is an isomorphism. 
We conclude by setting $\cB= \cK^\vee \otimes \rho^*\cA^\vee$. 
\end{proof}

\section{Construction of  relative flags and conclusion}\label{sec:flags}

As we have already remarked, we will consider the family of rational curves parametrized by $\HM$, denoted $p:\HU\to \HM$, and the evaluation map $q:\HU\to X$, which is the fiber product $\cU_1\times_X\cZ$. We may consider, on one hand, the $\DA_{r-1}$-bundle $\ol{\cU}_1\to X$ associated to $\cU_1\to X$, whose fibers at every point $x\in X$ are the complete flags of $\P(\cF_{1,x})$. On the other hand, we may consider the fiber product:
$$
\xymatrix{\ol{\cU}_1\times_{\cU_1}{\HU}\ar[r]\ar[d]&\HU\ar[d]\\\ol{\cU}_1\ar[r]&\cU_1}
$$
Since  $\HU\to\cU_1$ is given by a cocycle with values in $\SP(\cQ^{\,\vee}_x)$, by Proposition \ref{prop:Uasrel}, the same holds for $\ol{\cU}_1\times_{\cU_1}{\HU}\to\ol{\cU}_1$, and we may construct its associated $\DC_{n-r}$-bundle,  $\ol{\HU}\to \ol{\cU}_1$, and the composition 
$$
\ol{q}:\ol{\HU}\lra \ol{\cU}_1\lra X
$$
is an $(\DA_{r-1}\sqcup\DC_{n-r})$-bundle over $X$, that factors through $q: \HU \to X$.


We number the nodes of the Dynkin diagram $\DA_{r-1}\sqcup\DC_{n-r}$ over the set $D=\{1,2,\dots,r-1,r+1,r+2,\dots,n\}$, as in  Figure (\ref{eq:dynkins2}) below, so that  the fibers of $q$ are the homogeneous manifolds corresponding to the marking of this diagram  at the set $I:=\{r-1,r+1\}$ (see for instance \cite[Section 2.2]{MOSWW}).

\begin{equation}\label{eq:dynkins2}
\begin{array}{c}\vspace{-0.1cm}
\ifx\du\undefined
  \newlength{\du}
\fi
\setlength{\du}{3.3\unitlength}
\begin{tikzpicture}
\pgftransformxscale{1.000000}
\pgftransformyscale{1.000000}

\definecolor{dialinecolor}{rgb}{0.000000, 0.000000, 0.000000} 
\pgfsetstrokecolor{dialinecolor}
\definecolor{dialinecolor}{rgb}{0.000000, 0.000000, 0.000000} 
\pgfsetfillcolor{dialinecolor}


\pgfsetlinewidth{0.300000\du}
\pgfsetdash{}{0pt}
\pgfsetdash{}{0pt}

\pgfpathellipse{\pgfpoint{-6\du}{0\du}}{\pgfpoint{1\du}{0\du}}{\pgfpoint{0\du}{1\du}}
\pgfusepath{stroke}
\node at (-6\du,0\du){};

\pgfpathellipse{\pgfpoint{4\du}{0\du}}{\pgfpoint{1\du}{0\du}}{\pgfpoint{0\du}{1\du}}
\pgfusepath{stroke}
\node at (4\du,0\du){};

\pgfpathellipse{\pgfpoint{14\du}{0\du}}{\pgfpoint{1\du}{0\du}}{\pgfpoint{0\du}{1\du}}
\pgfusepath{stroke}
\node at (14\du,0\du){};

\pgfpathellipse{\pgfpoint{24\du}{0\du}}{\pgfpoint{1\du}{0\du}}{\pgfpoint{0\du}{1\du}}
\pgfusepath{stroke}
\node at (24\du,0\du){};

\pgfpathellipse{\pgfpoint{34\du}{0\du}}{\pgfpoint{1\du}{0\du}}{\pgfpoint{0\du}{1\du}}
\pgfusepath{stroke}
\node at (34\du,0\du){};

\pgfpathellipse{\pgfpoint{44\du}{0\du}}{\pgfpoint{1\du}{0\du}}{\pgfpoint{0\du}{1\du}}
\pgfusepath{stroke}
\node at (44\du,0\du){};

\pgfpathellipse{\pgfpoint{54\du}{0\du}}{\pgfpoint{1\du}{0\du}}{\pgfpoint{0\du}{1\du}}
\pgfusepath{stroke}
\node at (54\du,0\du){};

\pgfpathellipse{\pgfpoint{64\du}{0\du}}{\pgfpoint{1\du}{0\du}}{\pgfpoint{0\du}{1\du}}
\pgfusepath{stroke}
\node at (64\du,0\du){};

\pgfsetlinewidth{0.300000\du}
\pgfsetdash{}{0pt}
\pgfsetdash{}{0pt}
\pgfsetbuttcap

{\draw (-5\du,0\du)--(3\du,0\du);}
{\draw (15\du,0\du)--(23\du,0\du);}
{\draw (35\du,0\du)--(43\du,0\du);}
{\draw (54.65\du,0.7\du)--(63.35\du,0.7\du);}
{\draw (54.65\du,-0.7\du)--(63.35\du,-0.7\du);}


{\pgfsetcornersarced{\pgfpoint{0.300000\du}{0.300000\du}}\definecolor{dialinecolor}{rgb}{0.000000, 0.000000, 0.000000}
\pgfsetstrokecolor{dialinecolor}
\draw (60.8\du,-1.2\du)--(57\du,0\du)--(60.8\du,1.2\du);}

\pgfsetlinewidth{0.400000\du}
\pgfsetdash{{1.000000\du}{1.000000\du}}{0\du}
\pgfsetdash{{1.000000\du}{1.00000\du}}{0\du}
\pgfsetbuttcap
{\draw (5.3\du,-1\du)--(13\du,-1\du);}
{\draw (45.3\du,-1\du)--(53\du,-1\du);}


\node[anchor=west] at (4.5\du,-4\du){${\rm A}_{r-1}$};
\node[anchor=west] at (44.5\du,-4\du){${\rm C}_{n-r}$};
\node[anchor=west] at (26.8\du,-6.5\du){$I$};
\node[anchor=west] at (21.2\du,-3.5\du){$\underbrace{\hspace{1.5cm}}$};

\node[anchor=south] at (-6\du,1.1\du){$\scriptstyle 1$};

\node[anchor=south] at (4\du,1.1\du){$\scriptstyle 2$};

\node[anchor=south] at (14\du,1.1\du){$\scriptstyle r-2$};

\node[anchor=south] at (24\du,1.1\du){$\scriptstyle r-1$};

\node[anchor=south] at (34\du,1.1\du){$\scriptstyle r+1$};

\node[anchor=south] at (44\du,1.1\du){$\scriptstyle r+2$};

\node[anchor=south] at (54\du,1.1\du){$\scriptstyle n-1$};

\node[anchor=south] at (64\du,1.1\du){$\scriptstyle n$};

\end{tikzpicture} \vspace{0.1cm}\\
\end{array}
\end{equation}

\begin{proposition}\label{prop:fine}
The variety $\ol{\HU}$ is the complete flag manifold of type $\DC_n$, and $X$ is a symplectic Grassmannian.
\end{proposition}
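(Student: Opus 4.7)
The plan is to exhibit $n$ independent $\P^1$-bundle structures on $\ol{\HU}$ and invoke \cite{OSWW} to conclude that $\ol{\HU}$ is a rational homogeneous manifold, whose Dynkin type we then identify as $\DC_n$. Since $X$ is recovered as the contraction of $\ol{\HU}$ corresponding to marking node $r$, this will give $X \simeq \SP(2n)/P_r$, as required. First I would verify that $\rho(\ol{\HU}) = n$: as $\ol{\HU} \to X$ is an $(\DA_{r-1} \sqcup \DC_{n-r})$-bundle with fibers of Picard number $(r-1)+(n-r)=n-1$, and $X$ has Picard number one, the total Picard number is exactly $n$.

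Next I would produce the $n$ $\P^1$-bundle structures. For each node $i \in \{1,\dots,r-1,r+1,\dots,n\}$, the minimal parabolic contraction of the flag bundle $\ol{\HU} \to X$ obtained by forgetting the flag information at node $i$ yields a $\P^1$-bundle; this accounts for $n-1$ structures. The $n$-th structure, to be associated with node $r$, must come from the family of isotropic lines parametrized by $\HM$. Concretely, I would lift the $\P^1$-bundle $p: \HU \to \HM$ to a $\P^1$-bundle $\ol{p}:\ol{\HU} \to \ol{\HM}$, where $\ol{\HM}$ is an appropriate flag bundle over $\HM$. The existence of this lifting reduces to showing that the flag bundle $\ol{\HU} \to \HU$ restricts, along each fiber $\ell'$ of $p$, to the pullback of a flag bundle at the corresponding point of $\HM$; this requires an analysis of the splitting types of the restrictions of $\cF_1$ and $\cQ$ along a curve $\ell$ of the family in $X$, refining the computation $(\cF_1)_{|\ell} \simeq E(-1,0^{r-1})$ of Corollary \ref{cor:numerology}.

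Once the $n$ $\P^1$-bundle structures are in place, \cite{OSWW} yields that $\ol{\HU}$ is a complete flag manifold of some semisimple Dynkin type. To identify this type, I would compute the tagged Dynkin diagram in the sense of \cite[Remark 3.18]{OSW}, by intersecting a minimal section with the relative canonical classes of the $n$ $\P^1$-bundles; the $n-1$ structures coming from the flag bundle over $X$ contribute the sub-diagram $\DA_{r-1} \sqcup \DC_{n-r}$, and the intersection numbers between the curves of the new $\P^1$-bundle and the canonical classes of its neighbours $r-1$ and $r+1$ force the remaining edges of the complete diagram to produce $\DC_n$. The hardest step I expect will be the construction of the extra $\P^1$-bundle associated with node $r$, specifically showing that the flag data over a point of $X$ propagates unambiguously along each line $\ell \in \HM$, so that the full flag bundle $\ol{\HU} \to \HU$ descends to $\HM$.
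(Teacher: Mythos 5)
Your proposal follows essentially the same route as the paper: $n-1$ of the $\P^1$-bundle structures come from the minimal parabolic contractions of the $(\DA_{r-1}\sqcup\DC_{n-r})$-bundle $\ol{\HU}\to X$, the $n$-th is obtained by descending $\ol{\HU}\to\HM$ to a $\P^1$-bundle $\ol{p}:\ol{\HU}\to\ol{\HM}$, and one concludes via the characterization of complete flag manifolds and a tagged-Dynkin-diagram computation. The only cosmetic difference is that the paper establishes the key descent step by computing the tag $(d_1,\dots,d_n)$ of $f^*\ol{\HU}$ directly with \cite[Formula (11)]{OSW} (forcing $d_{r-1}=d_{r+1}=1$ and all other $d_k=0$, hence triviality of $s^*f^*\ol{\HU}$), rather than by analyzing splitting types of $\cF_1$ and $\cQ$ on $\ell$ as you suggest; the two are equivalent in substance.
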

\begin{proof}

It is enough to prove that $\ol{\HU}$ is a complete flag manifold, since in this case, being a target of a contraction of $\ol{\HU}$, $X$ would be a rational homogeneous manifold; since the only rational homogeneous manifolds of Picard number one for which $\cM_x$ is as stated are the symplectic Grassmannians, the statement follows.

Now, in order to prove that $\ol{\HU}$ is a complete flag, we use \cite[Theorem A.1]{OSW}, for which we need to show that $\ol{\HU}$ admits $\rho_{\ol{\HU}}$ independent $\P^1$-bundle structures. This is obvious in the case $n=3$, $r=2$, so we may assume that $I\subsetneq D$.
Note that $\ol{\HU}$ has already $\rho_{\ol{\HU}}-1$ different $\P^1$-bundle structures, coming from the flag bundle construction described in Section \ref{ssec:flag}. Hence we only need to find the remaining $\P^1$-bundle structure, which will be constructed by means of minimal sections over isotropic lines.

Let $f:\P^1\to X$ be the normalization of any curve $\Gamma$ 
of the family $\HM$  and consider the pull-back $f^*\HU$ with the natural section $s:\P^1\to s(\P^1) \subset f^*\HU$.  Denote by $ \ol{s}: \P^1\to s^*{f}^*\ol{\HU}$ a minimal section of the bundle $s^*{f}^*\ol{\HU}$ over $\P^1$ and  by $\ol{\Gamma}$ its image.  We have then the following commutative diagram:

$$
\xymatrix@=35pt{
& s^*{f}^*\ol{\HU}\pb \ar@{^{(}->}[]+<0ex,-2.5ex>;[d]\ar[r]_{\pi} 
\ar@{^{(}->}[]+<0ex,-2.5ex>;[d]
& \,\,\,\P^1 \ar@{=}[rd]  \ar@{^{(}->}[]+<0ex,-2.5ex>;[d]^s \ar@/_{2mm}/[]+<-2ex,+2.2ex>;[l]+<+0.7ex,+2.2ex>_{\ol{s}} &\\
& f^*\ol{\HU}\pb\ar[d]^{\ol f}\ar[r]_{\pi}& f^*\HU\pb\ar[d]^{f}\ar[r]_q& \P^1\ar[d]^{f}\\
\HM & \ol{\HU}\ar[l]^{p\circ\pi}\ar[r]_{\pi}&\HU 
\ar@/^{3mm}/[]+<-2ex,-2ex>;[ll]+<+2ex,-2ex>_{p} \ar[r]_q&X }
$$

As in \cite[Section 4]{OSW} 
one can show that 
$\ol\Gamma$ is a minimal section of both the $(\DA_{r-1}\sqcup\DC_{n-r})$-bundle $f^*\ol{\HU}$ and the $(\DA_{r-2}\sqcup\DC_{n-r-1})$-bundle $s^*{f}^*\ol{\HU}$;  moreover  $s^*{f}^*\ol{\HU}$ is determined by the tagged Dynkin diagram obtained  by eliminating  the nodes indexed by $I$ from the tagged Dynkin diagram of  $f^*\ol{\HU}$.

Now we want to compute the tag $(d_1, \dots d_{r-1}, d_{r+1}, \dots d_n)$ of  $f^*\ol{\HU}$; to this aim we use  \cite[Formula (11)]{OSW}:
\begin{equation*}
\pi^*K_{\HU/X} \cdot \ol{\Gamma}=\sum_{i \in D} b_id_i -\sum_{j \not \in I}c_jd_j
\end{equation*}
where the $b_i$'s (resp. the $c_j$'s) can be read from \cite[Table 1]{OSW}, for $\cD=\DA_{r-1}\sqcup\DC_{n-r}$ (resp. $\cD=\DA_{r-2}\sqcup\DC_{n-r-1}$), obtaining
\begin{equation*}
2n-r-1= \pi^*K_{\HU/X} \cdot \ol{\Gamma}=(r-1)d_{r-1}+(2n-2r)d_{r+1}+\sum_{j \not \in I}a_jd_j,
\end{equation*}

$$a_j= \begin{cases} j & \textrm{for~} j=1, \dots, r-2\\
2n-2r & \textrm{for~} j=r+2, \dots, n-1\\
n-r & \textrm{for~} j= n
\end{cases}
$$
Now, since all the $d_k$ are nonnegative, and $d_{r-1},d_{r+1}$ are strictly positive (see \cite[Proof of (4.2.2)]{OSW}) we have that $d_{r-1}=d_{r+1}=1$ and $d_k=0$ for every other $k$.
In particular, by \cite[Proposition 3.17]{OSW}, the flag bundle $s^*f^*\ol{\HU}$ is trivial, and we may deduce, reasoning as in \cite[Corollary 4.3]{OSW}, that there exists a smooth projective variety $\ol{\HM}$ such that the morphism $\ol{\HU}\to \HM$ factors via a smooth $\P^1$-bundle $\ol{p}:\ol{\HU}\to \ol{\HM}$. Now, as in \cite[Proof of Theorem 1.1]{OSW} we can show that in $\NE(\ol{\HU})$ there exists $n=\rho_{\ol{\HU}}$ independent $K_X$-negative classes generating $n$ extremal rays, whose associated elementary contractions $\pi_i:X\to X_i$ are smooth $\P^1$-bundles, and we conclude by \cite[Theorem A.1]{OSW}.
\end{proof}

\bigskip

\noindent{\bf Acknowledgements.} The authors would like to thank J.A. Wi\'sniewski for the interesting discussions we had on this topic, during his visit to the University of Trento in 2016.


\end{document}